\documentclass[11pt]{article}

\usepackage{cite}
\usepackage{subfigure}
\usepackage{graphicx, color, graphpap}
\usepackage[normalem]{ulem}
\usepackage{amsmath,amssymb,amsthm}
\usepackage{ifthen}
\usepackage{mathrsfs}
\usepackage{mathtools}
\usepackage{enumitem}
\usepackage{lineno}
\usepackage{float}
\usepackage{fancyhdr}
\usepackage[us,12hr]{datetime} 
\usepackage{exscale}
\usepackage{tabularx}
\usepackage{latexsym}
\usepackage{fullpage}       
\usepackage{verbatim}
\usepackage{multirow}
\usepackage{overpic}
\usepackage{comment} % writing comments
\usepackage{hyperref}
\usepackage{blkarray}
\usepackage{adjustbox}
\usepackage{tcolorbox}

\usepackage{pgfplots}
\usepackage{marvosym}
\usepackage[noabbrev]{cleveref}
\usepackage{tikz}

\usepackage{algorithm}
\usepackage{algpseudocode}

\pgfplotsset{compat=1.11}
\usetikzlibrary{arrows, arrows.meta}
\usetikzlibrary{shapes.geometric, arrows.meta, positioning}

\tikzstyle{startstop} = [rectangle, rounded corners, minimum width=3.5cm, minimum height=1cm,text centered, draw=black, fill=blue!10]
\tikzstyle{process} = [rectangle, minimum width=4cm, minimum height=1cm, text centered, draw=black, fill=green!10]
\tikzstyle{io} = [trapezium, trapezium left angle=70, trapezium right angle=110, minimum width=3.5cm, minimum height=1cm, text centered, draw=black, fill=orange!15]
\tikzstyle{arrow} = [thick,->,>=stealth]

\definecolor{methD}{RGB}{230,120,50}
\definecolor{methDD}{RGB}{50,140,90}
\definecolor{methSDD}{RGB}{50,90,170}
\definecolor{methPSD}{RGB}{180,40,40}
\tikzset{
	visbox/.style={rounded corners=8pt, line width=1.2pt, align=center, inner sep=8pt},
	visnestouter/.style={visbox, minimum width=9.6cm, minimum height=4.4cm},
	visnest/.style={visbox, minimum width=7.6cm, minimum height=3.25cm},
	visnestmid/.style={visbox, minimum width=5.7cm, minimum height=2.2cm},
	visnestinn/.style={visbox, minimum width=3.4cm, minimum height=1.05cm,
		inner sep=3pt, font=\scriptsize\bfseries, align=center, text=methD},
	visnestpos/.style={anchor=south east},
	visnestinset/.style={xshift=-5pt, yshift=5pt},
}

\newcommand{\FW}{\mathbb{FW}}

\newcommand{\Diag}{\text{Diag}}

\DeclareMathOperator{\range}{range}

\newcommand{\supp}{\text{supp}}

\newcommand{\A}{\mathcal{A}}

\newcommand{\aff}{\text{aff}}

\newtheorem{prop}{Proposition}
\newtheorem{lem}{Lemma}[section]
\newtheorem{thm}{Theorem}[section]
\newtheorem{cor}{Corollary}[section]

\newtheorem{ex}{Example}[section]

\crefname{thm}{Theorem}{Theorems}
\Crefname{thm}{Theorem}{Theorems}
\crefname{problem}{Problem}{Theorems}
\Crefname{problem}{Problem}{Theorems}
\Crefname{assump}{Assumption}{Theorems}
\crefname{assump}{Assumption}{Theorems}
\crefname{assumption}{Assumption}{Assumptions}
\Crefname{assumption}{Assumption}{Assumptions}
\crefname{conjecture}{Conjecture}{Theorems}
\Crefname{conjecture}{Conjecture}{Theorems}
\crefname{prop}{Proposition}{Propositions}
\Crefname{prop}{Proposition}{Propositions}
\crefname{cor}{Corollary}{Corollaries}
\Crefname{cor}{Corollary}{Corollaries}
\crefname{lem}{Lemma}{Lemmas}
\Crefname{lem}{Lemma}{Lemmas}
\theoremstyle{definition}
\crefname{conj}{Conjecture}{Conjectures}
\Crefname{conj}{Conjecture}{Conjectures}
\crefname{remark}{Remark}{Remarks}
\Crefname{remark}{Remark}{Remarks}
\crefname{rmk}{Remark}{Remarks}
\Crefname{rmk}{Remark}{Remarks}
\crefname{example}{Example}{Examples}
\Crefname{example}{Example}{Examples}
\crefname{align}{}{}
\Crefname{align}{}{}
\crefname{equation}{}{}
\Crefname{equation}{}{}

\def\eqref#1{{\normalfont(\ref{#1})}}
\usepackage{authblk}

\author[1]{Hao Hu\thanks{Corresponding author. Email: \url{hhu2@clemson.edu}.}}
\author[1]{Blake Smith\thanks{Email: \url{bjs5@g.clemson.edu}.}}
\author[1]{Mingming Xu\thanks{Email: \url{mingmix@g.clemson.edu}.}}
\affil[1]{School of Mathematical and Statistical Sciences, Clemson University, Clemson, USA}

\begin{document}

\title{Face Identification via Polyhedral Relaxations for Binary Programs: A Comparison with Factor-Width Partial Facial Reduction}

\break
\date{\currenttime, \today}
\maketitle

\medskip

\begin{abstract}
\begingroup
Facial reduction (FR) is a preprocessing technique used to restore Slater's
condition in semidefinite programming (SDP), but each step generally requires
solving an auxiliary SDP.  Factor-width-\(k\) partial FR restricts the class of
exposing matrices.  Increasing \(k\) enlarges this class, but, for \(k\geq3\),
the auxiliary problem remains an SDP involving positive semidefinite blocks of
order \(k\).  For SDP relaxations of binary programs, we replace this auxiliary
SDP by an LP-based face-identification method.  We construct a polyhedron in
subset-indexed moment variables that has polynomial size for fixed \(k\).  The
resulting face contains every feasible binary lift and is contained in every
face exposed by a factor-width-\(k\) exposing matrix from the same auxiliary
system.  Thus, linear programming identifies a face no larger than those
obtained from the factor-width-\(k\) auxiliary SDP while preserving all
feasible binary points.  To iterate the construction, we represent each
identified face by linear equations in the original matrix variable, thereby
retaining the moment-matrix indexing and the factor-width comparison at every
iteration.

\endgroup
\end{abstract}

{\bf Key Words:}
semidefinite programming, binary optimization, linear programming, facial
reduction, moment matrix, factor-width cone

\section{Introduction}

\begingroup
Semidefinite programming (SDP) relaxations provide strong bounds for many
binary optimization problems
\cite{shor1987quadratic,lovasz1991cones,goemans1995improved,laurent2003comparison}.
They lift the problem to a matrix space while retaining the quadratic
identities satisfied by binary variables.
Higher-order constructions based on the reformulation-linearization technique
(RLT)---equivalently, the Sherali--Adams hierarchy in this setting---and
moment methods exploit the same binary structure
\cite{sherali1990hierarchy,sherali2013reformulation,laurent2012semidefinite}.

SDP relaxations may fail Slater's condition when their feasible matrices are
confined to a proper face of the positive semidefinite cone.  In relaxations
of binary programs, combinatorial equalities and their lifted consequences
can force all feasible matrices to share a nontrivial nullspace.  Facial
reduction (FR) addresses this failure of strict feasibility by identifying a
smaller face that contains the feasible set
\cite{borwein1981regularizing,borwein1981facial,waki2013facial}.  Each step,
however, ordinarily requires solving an auxiliary SDP to obtain an exposing
matrix.  Partial facial reduction reduces this cost by restricting the search
for an exposing matrix to an inner approximation of the positive semidefinite
cone \cite{permenter2018simplified}.  Factor-width cones provide one such
hierarchy.  Factor widths one and two lead, respectively, to a linear program
(LP) and a second-order cone program (SOCP), although the resulting restricted
auxiliary problems may identify only limited facial structure; relevant
structural characterizations and limitations are given in
\cite{hu2026facestructure}.  Increasing the factor width enlarges the class
of available exposing matrices.  For \(k\geq3\),
however, the auxiliary problem is again an SDP, now involving \(k\times k\)
positive semidefinite constraints \cite{boman2005factor}.

This trade-off motivates a direct use of the binary structure.
We construct a polyhedron from local RLT bound-factor inequalities in
subset-indexed moment variables.  By the standard subset-indexed
moment-matrix factorization, these inequalities encode positive
semidefiniteness of the corresponding local moment matrices
\cite[Sections~3.1--3.2]{laurent2003comparison} and
\cite{sherali1990hierarchy}.  For
fixed \(k\), the polyhedron has polynomial size, and the resulting face can be
identified using linear programming.  We prove that this face contains all
feasible binary lifts and is contained in every face exposed by a
factor-width-\(k\) exposing matrix from the auxiliary system.  Thus, the face
comparison is obtained by LP rather than by solving the factor-width auxiliary
SDP.

To state the comparison precisely, denote the resulting face by
\(F_{\mathrm{aff}}\), and let \(P_{\mathrm{lift}}\) be the set of rank-one
lifts of the feasible binary points.  Although \(F_{\mathrm{aff}}\) contains
\(P_{\mathrm{lift}}\), it may
exclude fractional matrices of the original SDP relaxation and therefore need
not be a face obtained by classical facial reduction of that relaxation.
Let \(F_{\mathrm{fw}}\) denote the face exposed by a maximum-rank matrix
in the factor-width-\(k\) cone intersection of the first auxiliary system.  The
central comparison is
\[
P_{\mathrm{lift}}\subseteq F_{\mathrm{aff}}\subseteq F_{\mathrm{fw}}.
\]
The left inclusion shows that the polyhedral construction contains the
rank-one lift of every feasible binary point; the right inclusion shows that
the polyhedral face is contained in the face exposed by such a maximum-rank
matrix.  In fact, the containment holds for every factor-width-\(k\) exposing
matrix in the auxiliary system, not only for a maximum-rank one.

To iterate the construction, we represent each identified face
by linear equations in the original matrix variable rather than immediately
substituting a smaller matrix variable.  This representation retains the
subset indexing and allows the local RLT construction to be reapplied.  The
resulting sequence preserves all feasible binary lifts, and the factor-width
comparison holds at every iteration.

\paragraph{Contributions.}
Our contributions are: (i) an LP-based face-identification
method using a local RLT polyhedron of polynomial size for fixed \(k\); (ii) a
proof that the identified face contains every feasible binary lift and is
contained in every face exposed by a factor-width-\(k\) exposing matrix from
the auxiliary system; and (iii) an equation-based iteration that retains the
original matrix coordinates and preserves the factor-width comparison at every
step.

\paragraph{Related work.}
Classical and partial facial reduction are developed in
\cite{borwein1981regularizing,borwein1981facial,waki2013facial,permenter2018simplified}.
Factor-width cones are studied in
\cite{boman2005factor}.  Diagonally dominant sum-of-squares (DSOS) and scaled
diagonally dominant sum-of-squares (SDSOS) optimization use diagonally
dominant and scaled diagonally dominant matrices,
respectively; the latter form the factor-width-two cone
\cite{ahmadi2019dsos}.  Our comparison instead uses
factor-width cones in a facial-reduction auxiliary problem.  The
factorization of subset-indexed moment matrices and the bound-factor
inequalities are standard in RLT, Sherali--Adams, and moment descriptions
\cite{laurent2003comparison,sherali1990hierarchy,sherali2013reformulation},
whereas Affine FR uses affine-hull information to construct
faces of the positive semidefinite cone \cite{hu2024affine}.  The present results concern the
comparison with faces exposed by factor-width-\(k\) exposing matrices and its
repeated application without intermediate changes of matrix coordinates.

\paragraph{Organization.}
\Cref{sec_prel} gives the required background.
\Cref{sec:fw_lp_fr} constructs \(F_{\mathrm{aff}}\) and proves the factor-width
comparison.  \Cref{sec:structure_preserving_fr} uses face equations to
iterate the polyhedral construction in the original matrix coordinates.
\endgroup

\section{Preliminaries}\label{sec_prel}
\subsection{Notation}

For a positive integer \(p\), let \(\mathbb R^p\) be the
\(p\)-dimensional Euclidean space and let \(\mathbb S^p\) be the space of
\(p\times p\) real symmetric matrices.  We write
\(\mathbb S_+^p\) for the cone of positive semidefinite matrices; thus,
for \(X\in\mathbb S^p\), \(X\succeq0\) means
\(X\in\mathbb S_+^p\).  For a finite index set \(I\),
\(\mathbb R^I\) denotes the space of real vectors indexed by \(I\).
Vector inequalities are interpreted componentwise.

We use \(\langle\cdot,\cdot\rangle\) for the standard Euclidean inner
product on vector spaces and the Frobenius inner product on symmetric
matrix spaces.  In particular,
\(\langle X,Y\rangle:=\operatorname{tr}(XY)\) for symmetric matrices
\(X\) and \(Y\).  For \(i\in I\), \(e_i\in\mathbb R^I\) denotes the
standard unit vector indexed by \(i\); the relevant index set will be
clear from context or stated explicitly.
For a vector \(v\),
\(\operatorname{supp}(v):=\{i:v_i\neq0\}\).
For a vector \(d\), \(\operatorname{Diag}(d)\) denotes the diagonal
matrix with diagonal \(d\).  The superscript \(T\) denotes transpose.

For a matrix or linear map \(T\), \(\operatorname{range}(T)\) and
\(\ker(T)\) denote its range and kernel.  If \(T\) is a linear map
between finite-dimensional inner-product spaces, then \(T^*\) denotes
its adjoint.  For any set \(C\), \(\operatorname{span}(C)\) and
\(\aff(C)\) denote its linear span and affine hull, respectively.  When
\(C\) is convex, \(\operatorname{ri}(C)\) denotes its relative interior.
For any subset \(S\) of an inner-product space,
\[
S^\perp
:=
\{z:\langle z,s\rangle=0
\text{ for every }s\in S\}
=
\operatorname{span}(S)^\perp.
\]
For an individual element \(z\), we write
\(z^\perp:=\{z\}^\perp\).

\subsection{Facial reduction}
\label{sec:prelim_fra}
Let \(L\subseteq\mathbb S^n\) be an affine subspace such that
\(L\cap\mathbb S_+^n\neq\emptyset\).  Slater's condition holds if \(L\)
contains a positive definite matrix; in this case, the SDP system is
\emph{strictly feasible}.  If Slater's condition fails, there exists a
nonzero exposing matrix \(W\in L^\perp\cap\mathbb S_+^n\).  It exposes the
proper face
\[
F
:=
\{X\in\mathbb S_+^n:\langle W,X\rangle=0\}
=
\{X\succeq0:\range(X)\subseteq\ker(W)\},
\]
which contains \(L\cap\mathbb S_+^n\).  More generally, every
face of \(\mathbb S_+^n\) has the range-space representation
\[
F=\{X\succeq0\mid \range(X)\subseteq\mathcal{V}\}
=\{VRV^T\mid R\in\mathbb{S}^r_+\},
\]
for some subspace \(\mathcal{V}\subseteq\mathbb{R}^n\), where the columns of
\(V\in\mathbb{R}^{n\times r}\) span \(\mathcal{V}\).
Substituting \(X=VRV^T\) yields an SDP in the smaller matrix
variable \(R\in\mathbb S_+^r\).  Repeating this procedure until the reduced
system is strictly feasible yields the minimal face of \(\mathbb S_+^n\)
containing \(L\cap\mathbb S_+^n\).

\subsection{Partial facial reduction}
Partial facial reduction (partial FR) restricts the search for an exposing
matrix to an inner approximation of the positive semidefinite cone
\cite{permenter2018simplified}.  At the initial face, given a closed convex
cone \(\mathcal K\subseteq\mathbb S_+^n\), the restricted auxiliary system is
\begin{equation}\label{eq:partial_fr_search}
W\in L^\perp\cap(\mathcal K\setminus\{0\}).
\end{equation}
Every matrix satisfying this system is a valid exposing matrix.  However, the
restricted system may be infeasible even when the standard auxiliary system
has a solution, and the resulting procedure may terminate before reaching the
minimal face.

For \(k\in\{1,\ldots,n\}\), the \emph{factor-width-\(k\) cone} is
\begin{equation}\label{eq:fwk_def}
\FW_k^n
:=
\left\{
\sum_{t=1}^{q} g_tg_t^T
\;\middle|\;
q\geq1,\quad
g_t\in\mathbb R^n,\quad
|\supp(g_t)|\leq k\quad (t=1,\ldots,q)
\right\}.
\end{equation}
Thus \(\FW_k^n\subseteq\mathbb S_+^n\).  Its dual cone consists of the matrices
in \(\mathbb S^n\) whose principal submatrices of order at most \(k\) are
positive semidefinite \cite{boman2005factor}, and
\[
\FW_1^n\subseteq\FW_2^n\subseteq\cdots\subseteq\FW_n^n=\mathbb S_+^n.
\]
Choosing \(\mathcal K=\FW_k^n\) in
\eqref{eq:partial_fr_search} gives factor-width-\(k\) partial FR.  The
auxiliary problem is an LP for \(k=1\), an SOCP for \(k=2\), and an SDP with
positive semidefinite blocks of order \(k\) for \(k\geq3\).  Increasing \(k\)
enlarges the class of admissible exposing matrices, while \(k=n\) recovers the
standard auxiliary problem at the initial face.

\subsection{Binary lifts and subset-indexed moment matrices}
\label{subsec:sdp_relaxations_binary_programs}
\label{sec:lifted_feasible_set_faces}
\label{sec:local_rlt_polyhedron}

Let \(\emptyset\neq P\subseteq\{0,1\}^n\).  An affine slice
\(L\cap\mathbb{S}^{n+1}_+\) is an SDP relaxation of \(P\) if it contains the
rank-one lift of every \(x\in P\), that is,
\[
\begin{pmatrix}1\\ x\end{pmatrix}
\begin{pmatrix}1\\ x\end{pmatrix}^{T}
\in L\cap\mathbb{S}^{n+1}_+
\qquad\text{for every }x\in P.
\]
We index the coordinates of lifted vectors and the rows and columns of lifted
matrices by \(\emptyset,\allowbreak\{1\},\ldots,\allowbreak\{n\}\), where \(\emptyset\) represents
the constant monomial and \(\{i\}\) corresponds to the variable \(x_i\).

Classical facial reduction requires the exposed face to contain the entire
SDP feasible set.  Hence a nonzero exposing matrix \(W\succeq0\) must satisfy
\begin{equation}\label{eq:fr_exposing_vector_for_sdp_relaxation}
L\cap\mathbb{S}^{n+1}_+
\subseteq
\{Y\in\mathbb{S}^{n+1}_+\mid \langle W,Y\rangle=0\}.
\end{equation}
Our polyhedral construction instead targets the set of rank-one lifts
\begin{equation}\label{eq:first_level_lifted_binary_image}
P_{\mathrm{lift}}
:=
\left\{
yy^T\;\middle|\;
y=\begin{bmatrix}1\\ x\end{bmatrix},\ x\in P
\right\}
\subseteq\mathbb{S}^{n+1}_+ .
\end{equation}
Accordingly, we seek a nonzero \(\widehat W\succeq0\) such that
\begin{equation}\label{eq:first_level_face_contains_p1}
P_{\mathrm{lift}}
\subseteq
\{Y\in\mathbb{S}^{n+1}_+\mid \langle \widehat W,Y\rangle=0\}.
\end{equation}
Equivalently, \(\widehat W[1\;x^T]^T=0\) for every \(x\in P\).
The resulting face need not contain the entire SDP feasible set.  It does,
however, contain every rank-one lift of a point in \(P\).  Intersecting the SDP
relaxation with this face therefore yields a strengthening that preserves all
such rank-one lifts.  We use the term \emph{face identification} for the general
construction; it constitutes a valid facial-reduction step precisely when
\eqref{eq:fr_exposing_vector_for_sdp_relaxation} also holds with
\(W=\widehat W\).

The polyhedral construction in the next section uses the standard
subset-indexed moment-matrix notation for binary optimization.  Let
\([n]:=\{1,\ldots,n\}\).  For \(U\subseteq[n]\) and
\(0\leq j\leq |U|\), define
\[
\mathcal P_j(U):=\{I\subseteq U\mid |I|\leq j\},
\qquad
\mathcal P(U):=\mathcal P_{|U|}(U).
\]
We write \(\mathcal P_k:=\mathcal P_k([n])\).  For each \(x\in P\), its induced
moment vector \(y\in\mathbb{R}^{\mathcal{P}_k}\) is defined by
\begin{equation}\label{eq:level_k_induced_moment_vector}
y_I:=\prod_{j\in I}x_j,
\qquad I\in\mathcal{P}_k,
\end{equation}
where \(y_\emptyset=1\).

The following standard diagonalization is the key to the polyhedral
representation used in the next section
\cite[Section~3.1]{laurent2003comparison}.

\begin{lem}[Subset-indexed moment-matrix diagonalization]
\label{lem:subset_moment_diagonalization}
For \(U\subseteq[n]\) and \(y\in\mathbb R^{\mathcal P(U)}\), define
\(M_U(y):=(y_{I\cup J})_{I,J\subseteq U}\).  Let \(Z_U\) be the zeta matrix
of \(\mathcal P(U)\), with \((Z_U)_{I,J}=1\) if and only if \(I\subseteq J\).
Then
\begin{equation}
\label{eq:subset_moment_diag}
M_U(y)=Z_U\Diag(Z_U^{-1}y)Z_U^T.
\end{equation}
Consequently, \(M_U(y)\succeq0\) if and only if \(Z_U^{-1}y\geq0\).
\end{lem}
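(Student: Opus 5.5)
We prove the factorization \eqref{eq:subset_moment_diag} entrywise and then deduce the semidefiniteness criterion from the invertibility of \(Z_U\).

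\textbf{Setup.} Define \(\lambda:=Z_U^{-1}y\in\mathbb R^{\mathcal P(U)}\). Since \(Z_U\) is unitriangular with respect to any linear extension of inclusion, it is invertible. By definition of \(\lambda\),
\[
y_K=(Z_U\lambda)_K=\sum_{K\subseteq H\subseteq U}\lambda_H
\qquad\text{for every }K\subseteq U.
\]
By M\"obius inversion, \(\lambda_H=\sum_{H\subseteq K\subseteq U}(-1)^{|K\setminus H|}y_K\). The inverted form will not actually be needed; only the relation \(y=Z_U\lambda\) is used.

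\textbf{Entrywise computation.} For \(I,J\subseteq U\),
\[
\bigl(Z_U\Diag(\lambda)Z_U^T\bigr)_{I,J}
=\sum_{H\subseteq U}(Z_U)_{I,H}\,\lambda_H\,(Z_U)_{J,H}
=\sum_{H\supseteq I,\ H\supseteq J}\lambda_H .
\]
The condition "\(H\supseteq I\) and \(H\supseteq J\)" is equivalent to \(H\supseteq I\cup J\), so the sum equals \(\sum_{H\supseteq I\cup J}\lambda_H=y_{I\cup J}=M_U(y)_{I,J}\). This proves \eqref{eq:subset_moment_diag}.

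\textbf{Semidefiniteness criterion.} The factorization exhibits \(M_U(y)\) as congruent to \(\Diag(\lambda)\) via the invertible matrix \(Z_U\). By Sylvester's law of inertia, or directly, \(M_U(y)\succeq0\) if and only if \(\Diag(\lambda)\succeq0\). The direct argument runs as follows:

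\emph{If} \(\lambda\ge0\), then \(v^TM_U(y)v=\sum_H\lambda_H\,(Z_U^Tv)_H^2\ge0\) for every \(v\).

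\emph{Conversely,} if \(M_U(y)\succeq0\), take \(v=Z_U^{-T}e_H\). Then \(v^TM_U(y)v=\lambda_H\ge0\).

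This is exactly the condition \(Z_U^{-1}y\ge0\).

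\textbf{Main obstacle.} The only point needing care is index bookkeeping: \((Z_U)_{I,H}=1\) iff \(I\subseteq H\), so the product \(Z_U\Diag(\lambda)Z_U^T\) sums over common supersets of \(I\) and \(J\), and these are precisely the supersets of \(I\cup J\). One must use this orientation of \(Z_U\) consistently in defining \(\lambda=Z_U^{-1}y\). With the transposed convention the identity would fail.
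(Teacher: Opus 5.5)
Your proof is correct and is essentially the standard argument. The paper gives no proof of its own and cites Laurent (Section~3.1), where the identity is checked exactly as you do: $\bigl(Z_U\Diag(\lambda)Z_U^T\bigr)_{I,J}$ sums $\lambda_H$ over the common supersets $H\supseteq I\cup J$, which gives $y_{I\cup J}$, and the semidefiniteness criterion then follows from congruence with the invertible matrix $Z_U$.
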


\begingroup
\section{Face identification via polyhedral relaxations and factor-width comparison}
\label{sec:fw_lp_fr}
\label{sec:direct_polyhedral_face}
\label{sec:comparison_factor_width_partial_fra}

This section compares factor-width-\(k\) partial facial reduction with face
identification via a polyhedral relaxation.  Let
\(L\cap\mathbb S_+^{n+1}\) be an SDP relaxation of the binary set \(P\), where
\(L\subseteq\mathbb S^{n+1}\) is affine.  Fix \(2\leq k\leq n\).  Every
nonzero matrix in \(L^\perp\cap\FW_k^{n+1}\) is a valid exposing matrix for
factor-width-\(k\) partial facial reduction.  For any
\(\bar W\in L^\perp\cap\FW_k^{n+1}\), including \(\bar W=0\), define
\[
F_{\mathrm{fw}}
:=
\{Y\in\mathbb S_+^{n+1}:
\range(Y)\subseteq\ker(\bar W)\}.
\]
If \(\bar W\) has maximum rank in \(L^\perp\cap\FW_k^{n+1}\), then
\(F_{\mathrm{fw}}\) is the face exposed by such a maximum-rank matrix.  The
comparison below holds for every
\(\bar W\in L^\perp\cap\FW_k^{n+1}\).

Let \(\mathcal U_k\) be the collection of all \(k\)-element subsets of
\([n]\), enumerated as \(\mathcal U_k=\{U_1,\ldots,U_q\}\), where
\(q=\binom nk\).  For
\(y\in\mathbb R^{\mathcal P_k}\), use the local matrices
\(M_{U_i}(y)\) from \Cref{lem:subset_moment_diagonalization} and define
\(M_1(y):=(y_{I\cup J})_{I,J\in\mathcal P_1}\).
Using these moment matrices, we construct a face
\(F_{\mathrm{aff}}\) satisfying
\(P_{\mathrm{lift}}\subseteq F_{\mathrm{aff}}\subseteq F_{\mathrm{fw}}\).  Define
\begin{equation}
\label{eq:comparison_affine_set_h}
H
:=
\{y\in\mathbb R^{\mathcal P_k}:M_1(y)\in L\}
\end{equation}
and
\begin{equation}
\label{eq:comparison_local_rlt_set_k}
K
:=
\{y\in\mathbb R^{\mathcal P_k}:
M_{U_i}(y)\succeq0,\ i=1,\ldots,q\}.
\end{equation}

\paragraph{Polyhedral representation.}
Although \(K\) is written using positive semidefinite constraints, it is a
polyhedron.  By \Cref{lem:subset_moment_diagonalization}, each constraint
\(M_{U_i}(y)\succeq0\) is equivalent to \(2^k\) linear inequalities.  Hence
the local moment constraints admit a description with at most
\(2^k\binom nk\) linear inequalities, a quantity polynomial in \(n\) for
fixed \(k\).  Since \(H\) is affine, \(H\cap K\) is a polyhedron.

For every \(x\in P\), its induced moment vector
\eqref{eq:level_k_induced_moment_vector} belongs to \(H\cap K\).
Let \(\Pi:\mathbb R^{\mathcal P_k}\to\mathbb R^{\mathcal P_1}\) select the
entries indexed by
\(\emptyset,\{1\},\ldots,\{n\}\).  Define
\begin{equation}
\label{eq:affine_hull_completed_subspace}
\mathcal V
:=
\Pi\operatorname{span}(\aff(H\cap K))
=
\Pi\operatorname{span}(H\cap K)
\end{equation}
and
\begin{equation}
\label{eq:direct_projected_polyhedral_subspace}
F_{\mathrm{aff}}
:=
\{Y\in\mathbb S_+^{n+1}:\range(Y)\subseteq\mathcal V\}.
\end{equation}
The affine hull of \(H\cap K\) can be recovered using standard LP methods
\cite{goldman1956theory,bertsimas1997introduction,mehdiloo2021finding}.
Its linear span, and hence the subspace in
\eqref{eq:affine_hull_completed_subspace}, then follow directly.
Consequently, constructing \(F_{\mathrm{aff}}\) does not require an auxiliary
SDP.

The following theorem compares the face obtained from the local moment
relaxation with the face exposed by \(\bar W\).

\begin{thm}[Direct factor-width comparison]
\label{thm:moment_face_inside_fw_face}
For every \(\bar W\in L^\perp\cap\FW_k^{n+1}\), the polyhedral face
\(F_{\mathrm{aff}}\) in \eqref{eq:direct_projected_polyhedral_subspace} and
the face \(F_{\mathrm{fw}}\) exposed by \(\bar W\) satisfy
\begin{equation}
\label{eq:affine_hull_completed_fw_chain}
P_{\mathrm{lift}}
\subseteq
F_{\mathrm{aff}}
\subseteq
F_{\mathrm{fw}}.
\end{equation}
\end{thm}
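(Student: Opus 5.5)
The plan is to prove the two inclusions separately. The left one is direct. The right one reduces to showing that \(\mathcal V\subseteq\ker(\bar W)\), because \(F_{\mathrm{aff}}\) and \(F_{\mathrm{fw}}\) are both range-space faces and \(\mathcal V\subseteq\ker(\bar W)\) gives \(F_{\mathrm{aff}}\subseteq F_{\mathrm{fw}}\) immediately.

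\textbf{Left inclusion.} Take \(x\in P\) and let \(y\) be its induced moment vector \eqref{eq:level_k_induced_moment_vector}.
\begin{itemize}
\item As noted before the theorem, \(y\in H\cap K\). Indeed, \(M_1(y)=(1;x)(1;x)^T\in L\) because \(L\cap\mathbb S^{n+1}_+\) is a relaxation of \(P\). Also, each \(M_{U_i}(y)\) is the rank-one matrix \(z z^T\) with \(z_I=\prod_{j\in I}x_j\), using \(x_j^2=x_j\).
\item Hence \(\Pi y=(1;x)\in\mathcal V\).
\item The lift \((1;x)(1;x)^T\) is positive semidefinite with range \(\operatorname{span}\{(1;x)\}\subseteq\mathcal V\), so it lies in \(F_{\mathrm{aff}}\).
\end{itemize}

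\textbf{Right inclusion.} Since \(\mathcal V=\Pi\operatorname{span}(H\cap K)\) and \(\ker(\bar W)\) is a linear subspace, it suffices to show \(\bar W\,\Pi y=0\) for every \(y\in H\cap K\).

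First I decompose \(\bar W\). Write \(\bar W=\sum_t g_tg_t^T\) with \(|\operatorname{supp}(g_t)|\le k\), indices in \(\mathcal P_1\). The support of each \(g_t\) involves at most \(k\) singletons \(\{j\}\), possibly together with \(\emptyset\), so it lies in \(\mathcal P_1(U)\) for some \(U\in\mathcal U_k\); here \(k\le n\) lets me pad to a \(k\)-set. Grouping terms by an assigned \(U_i\) gives
\[
\bar W=\sum_{i=1}^q W_i,\qquad W_i\succeq0,\quad \operatorname{supp}(W_i)\subseteq\mathcal P_1(U_i)\times\mathcal P_1(U_i).
\]

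Next I use the two memberships of \(y\). Since \(y\in H\), \(M_1(y)\in L\). Since \(\bar W\in L^\perp\) and \(L\) is affine, \(\langle\bar W,M_1(y)\rangle\) is constant on \(L\). Evaluating it at a lift \(Y_0\in P_{\mathrm{lift}}\subseteq L\) (here \(P\neq\emptyset\)) gives
\[
\langle\bar W,M_1(y)\rangle=\langle\bar W,Y_0\rangle,
\]
and I need this common value to be \(0\). This is the \textbf{main obstacle}. For \(\bar W\in L^\perp\) with \(L\) affine, I will interpret \(L^\perp\) as the set of \(W\) with \(\langle W,Y\rangle=0\) for all \(Y\in L\), following the notation \(S^\perp\) of the paper. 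Then \(\langle\bar W,M_1(y)\rangle=0\) holds directly. If instead only the direction space were intended, I would invoke the constant-value argument above and need a separate reason, namely a zero inner product on some lift.

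Then I split this zero along the \(U_i\):
\[
0=\sum_i\langle W_i,M_1(y)\rangle=\sum_i\langle W_i,M_1(y)[\mathcal P_1(U_i)]\rangle .
\]
The principal submatrix \(M_1(y)[\mathcal P_1(U_i)]\) is itself a principal submatrix of \(M_{U_i}(y)\), because \(y_{I\cup J}\) agrees on these indices. Since \(y\in K\), \(M_{U_i}(y)\succeq0\), so each term is an inner product of two PSD matrices and is nonnegative. As they sum to zero, every term vanishes.

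Finally I extract \(\bar W\Pi y=0\). Set \(A_i:=M_{U_i}(y)\succeq0\). The vector \(\Pi y\) restricted to \(\mathcal P_1(U_i)\) is the \(\emptyset\)-column of \(A_i\) restricted to those rows, because \(y_{\emptyset\cup J}=y_J\). From \(\langle W_i,A_i[\mathcal P_1(U_i)]\rangle=0\) with both matrices PSD I get \(W_iA_i[\mathcal P_1(U_i)]=0\). Every column of this principal submatrix, in particular the \(\emptyset\)-column (\(\emptyset\in\mathcal P_1(U_i)\)), lies in \(\ker W_i\). Hence \(W_i\Pi y=0\) for each \(i\), and summing over \(i\) gives \(\bar W\Pi y=0\), which completes the right inclusion.
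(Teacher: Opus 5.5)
Your proposal is correct and follows essentially the same route as the paper. You decompose $\bar W$ into positive semidefinite pieces supported on $\mathcal P_1(U)$ for $k$-sets $U$, and use $\langle\bar W,M_1(y)\rangle=0$. This holds directly because the paper defines $S^\perp=\operatorname{span}(S)^\perp$, so your reading of $L^\perp$ is the intended one. You then split that zero into nonnegative terms using the local moment matrices $M_{U_i}(y)\succeq0$, and read off the $\emptyset$-column to get $\bar W\Pi y=0$. The only cosmetic difference is that you group the rank-one terms into blocks $W_i$ and work with the principal submatrix $M_1(y)[\mathcal P_1(U_i)]$, whereas the paper zero-extends each $g_t$ to $\widetilde g_t$ and uses $M_{U_t}(y)\widetilde g_t=0$ directly.
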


\begin{proof}
For the moment vector \(y\) induced by any \(x\in P\), we have
\(y\in H\cap K\) and \(\Pi y=[1\;x^T]^T\).  Hence
\(\Pi y\in\mathcal V\), so
\(\range((\Pi y)(\Pi y)^T)\subseteq\mathcal V\).  Therefore
\(P_{\mathrm{lift}}\subseteq F_{\mathrm{aff}}\).
To prove the second inclusion, fix \(y\in H\cap K\).  By the definition of
factor width, write
\begin{equation}
\label{eq:direct_fw_decomposition}
\bar W
=
\sum_{t=1}^s g_tg_t^T,
\qquad
|\supp(g_t)|\leq k.
\end{equation}
For each \(t\), choose a set \(U_t\subseteq[n]\) of cardinality \(k\)
containing every \(j\) such that \(\{j\}\in\supp(g_t)\).  Extend \(g_t\) to
\(\widetilde g_t\in\mathbb R^{\mathcal P(U_t)}\) by retaining its entries on
\(\mathcal P_1(U_t)\) and setting all higher-order entries to zero.  The
principal submatrix of \(M_{U_t}(y)\) indexed by
\(\mathcal P_1(U_t)\) is exactly the principal submatrix of \(M_1(y)\) with
the same index set.  Since \(\widetilde g_t\) is zero outside
\(\mathcal P_1(U_t)\), it follows that
\begin{equation}
\label{eq:direct_local_quadratic_identity}
\widetilde g_t^TM_{U_t}(y)\widetilde g_t
=
g_t^TM_1(y)g_t.
\end{equation}

Since \(y\in H\cap K\), we have \(M_1(y)\in L\), and therefore
\(\langle \bar W,M_1(y)\rangle=0\).
Every matrix \(M_{U_t}(y)\) is positive semidefinite.  Hence
\begin{align*}
0
&=
\langle \bar W,M_1(y)\rangle\\
&=
\sum_{t=1}^s g_t^TM_1(y)g_t\\
&=
\sum_{t=1}^s
\widetilde g_t^TM_{U_t}(y)\widetilde g_t
\end{align*}
is a sum of nonnegative terms.  Every term is therefore zero.  Since
\(M_{U_t}(y)\succeq0\), this implies
\(M_{U_t}(y)\widetilde g_t=0\) for \(t=1,\ldots,s\).
The row indexed by \(\emptyset\) gives
\[
0
=
(g_t)_\emptyset y_\emptyset
+
\sum_{j\in U_t}(g_t)_{\{j\}} y_{\{j\}}
=
g_t^T\Pi y,
\qquad t=1,\ldots,s,
\]
where the last equality uses the choice of \(U_t\).
It follows from \eqref{eq:direct_fw_decomposition} that
\(\bar W\Pi y=0\).  Thus
\(\Pi y\in\ker(\bar W)\) for every \(y\in H\cap K\), and taking spans
gives \(\mathcal V\subseteq\ker(\bar W)\).  This proves the second inclusion
in \eqref{eq:affine_hull_completed_fw_chain}.
\end{proof}

The comparison above concerns faces containing \(P_{\mathrm{lift}}\).  For
\(F_{\mathrm{aff}}\) to contain the feasible set of the original SDP, as
required by classical facial reduction, one would additionally need
\(\range(Y)\subseteq\mathcal V\) for every
\(Y\in L\cap\mathbb S_+^{n+1}\).  This condition is not assumed or established
by the construction.

\begin{ex}[A simple illustration of strict containment]
\label{ex:strict_affine_factor_width_containment}
Let \(n=k=2\), let
\[
P=\{(0,0),(1,1)\},
\]
and consider the Shor SDP relaxation \(L\cap\mathbb S_{+}^{3}\) defined by
the affine subspace
\[
L
:=
\{Y\in\mathbb S^3:
Y_{\emptyset,\emptyset}=1,\
Y_{\{1\},\{1\}}=Y_{\emptyset,\{1\}},\
Y_{\{2\},\{2\}}=Y_{\emptyset,\{2\}},\
Y_{\emptyset,\{1\}}=Y_{\emptyset,\{2\}}\}.
\]
The rank-one lift of each point in \(P\) belongs to
\(L\cap\mathbb S_+^3\).  Moreover,
\[
Y^*
:=
\begin{bmatrix}
1&1/2&1/2\\
1/2&1/2&1/4\\
1/2&1/4&1/2
\end{bmatrix}
\in L
\]
is positive definite, since its Schur complement with respect to the
upper-left entry is \(\tfrac14 I_2\).  Hence the SDP relaxation satisfies
Slater's condition.  It follows that
\(L^\perp\cap\FW_2^3=\{0\}\).  Thus \(\bar W=0\) and
\(F_{\mathrm{fw}}=\mathbb S_+^3\).

For the polyhedral construction,
\[
H
=
\{y\in\mathbb R^{\mathcal P_2}:
y_\emptyset=1,\ y_{\{1\}}=y_{\{2\}}\}.
\]
Thus \(y_{\{1,2\}}\) is unrestricted in \(H\) and is constrained only
through \(K\).
Every vector in \(\Pi(H\cap K)\) therefore satisfies
\(v_{\{1\}}=v_{\{2\}}\).  Conversely, the moment vectors induced by the two
points in \(P\) belong to \(H\cap K\), and their first-level projections are
\((1,0,0)^T\) and \((1,1,1)^T\).  Consequently,
\[
\mathcal V
=
\{v\in\mathbb R^{\mathcal P_1}:v_{\{1\}}=v_{\{2\}}\},
\]
and \(F_{\mathrm{aff}}\) is the proper face associated with this subspace.
Thus
\[
F_{\mathrm{aff}}
\subsetneq
F_{\mathrm{fw}}
=
\mathbb S_+^3.
\]
This example shows that the polyhedral construction can identify a proper
face containing \(P_{\mathrm{lift}}\) even when the original SDP relaxation
is strictly feasible, in which case the factor-width auxiliary system admits
no nonzero exposing matrix.
\end{ex}

\section{Face identification in the original matrix coordinates}
\label{sec:structure_preserving_fr}

To iterate the polyhedral construction, we impose each
identified face by linear equations in the original matrix variable, thereby
retaining the subset indexing.  We first establish this representation for a
face containing the SDP feasible set and then use the same face-equation
representation for the
strengthened relaxations obtained from the \(P_{\mathrm{lift}}\)-preserving
faces identified by the polyhedral construction.

Related changes of SDP representation appear in the work of Pataki, who uses
row operations and congruence transformations to obtain canonical forms
\cite{pataki2017bad}, and in RiNNAL+, where an enlarged doubly nonnegative
relaxation is related to a lower-dimensional SDP--RLT relaxation
\cite{hou2026rinnalplus}.  Here, the face-equation representation enables
repeated polyhedral face identification in the original moment-matrix
coordinates.

\subsection{Structure-preserving facial reduction}
\label{sec:structure_preserving_fr_clean}

Let \(\A:\mathbb S^N\to\mathbb R^m\) be linear, let
\(b\in\mathbb R^m\), and let \(\A^*\) denote the adjoint of \(\A\).  Consider
the SDP feasible set
\begin{equation}
\label{eq:face_equation_original_sdp}
L\cap\mathbb S_+^N,
\qquad
L:=\{Y\in\mathbb S^N:\A(Y)=b\}.
\end{equation}
Suppose a face \(F\) contains this feasible set.  Let the columns of
\(V\in\mathbb R^{N\times r}\) form an orthonormal basis of the subspace
defining \(F\), and complete \(V\) to an orthogonal matrix \([\,V\ U\,]\).  Then
\[
F
=
\{Y\succeq0:\range(Y)\subseteq\range(V)\}
=
\{VRV^T:R\succeq0\}.
\]
Instead of substituting \(Y=VRV^T\), observe that, for a symmetric matrix
\(Y\),
\begin{equation}
\label{eq:face_equations_simple}
Y\in\operatorname{span}(F)
\quad\Longleftrightarrow\quad
YU=0.
\end{equation}
Define
\begin{equation}
\label{eq:face_equation_affine_space}
L_F:=
L\cap\operatorname{span}(F)
=
\{Y\in L:YU=0\}.
\end{equation}
Because \(F\) contains the feasible set in
\eqref{eq:face_equation_original_sdp},
\begin{equation}
\label{eq:face_equation_exact_feasible_set}
L\cap\mathbb S_+^N
=
L_F\cap\mathbb S_+^N.
\end{equation}
Thus \(L_F\) gives an equivalent reformulation that retains the original
matrix variable and its indexing.

Let \(\mathcal B:\mathbb S^N\to\mathbb R^{N\times(N-r)}\) be defined by
\(\mathcal B(Y):=YU\), with both spaces equipped with the Frobenius inner
product.  Its adjoint is
\[
\mathcal B^*(Z)=\tfrac12(ZU^T+UZ^T),
\qquad Z\in\mathbb R^{N\times(N-r)}.
\]
From \eqref{eq:face_equations_simple} and the
standard relation \(\range(\mathcal B^*)=(\ker\mathcal B)^\perp\),
\begin{equation}
\label{eq:face_equation_adjoint_range}
\ker(\mathcal B)=\operatorname{span}(F),
\qquad
\range(\mathcal B^*)=F^\perp,
\end{equation}
where \(F^\perp:=\operatorname{span}(F)^\perp\).

We compare the exposing vectors obtained from the two representations
\(L\cap F\) and \(L_F\cap\mathbb S_+^N\).  In the first representation, an
exposing vector has the form \(\A^*(\lambda)\), where
\(\lambda\in\mathbb R^m\).  In the second, the equations \(YU=0\) introduce
a matrix multiplier \(Z\in\mathbb R^{N\times(N-r)}\), and the exposing vector
has the form \(\A^*(\lambda)+\mathcal B^*(Z)\).  These two matrices differ by
an element of \(F^\perp\); hence, they have the same inner product with every
matrix in \(F\) and expose the same face of \(F\).

\begin{lem}[Equivalence of auxiliary certificates]
\label{lem:second_step_auxiliary_equivalence}
The multipliers \(\lambda\in\mathbb R^m\) satisfying
\begin{equation}
\label{eq:second_step_current_face_auxiliary}
V^T\A^*(\lambda)V\in\mathbb S_+^r\setminus\{0\},
\qquad
b^T\lambda=0,
\end{equation}
are exactly those for which there exists
\(Z\in\mathbb R^{N\times(N-r)}\) such that \((\lambda,Z)\) satisfies
\begin{equation}
\label{eq:second_step_ambient_auxiliary}
\A^*(\lambda)+\mathcal B^*(Z)\succeq0,
\qquad
\A^*(\lambda)+\mathcal B^*(Z)\notin F^\perp,
\qquad
b^T\lambda=0.
\end{equation}
\end{lem}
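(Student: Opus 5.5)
The plan is to work in the orthogonal coordinates given by \([\,V\ U\,]\).  Write a symmetric matrix \(S\in\mathbb S^N\) in block form relative to this basis:
\[
[\,V\ U\,]^TS[\,V\ U\,]=\begin{bmatrix}V^TSV & V^TSU\\ U^TSV & U^TSU\end{bmatrix}.
\]
The key structural fact is that \(F^\perp=\range(\mathcal B^*)\), by \eqref{eq:face_equation_adjoint_range}.  This space consists exactly of the symmetric matrices whose \((V,V)\) block vanishes.  Indeed, \(\operatorname{span}(F)=\{VRV^T:R\in\mathbb S^r\}\), and \(\langle S,VRV^T\rangle=\langle V^TSV,R\rangle\).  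Hence \(S\in F^\perp\) if and only if \(V^TSV=0\).  Both directions of the lemma reduce to this block description; the condition \(b^T\lambda=0\) is common to both systems and is simply carried along.

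\emph{From \eqref{eq:second_step_ambient_auxiliary} to \eqref{eq:second_step_current_face_auxiliary}.}  Let \(S:=\A^*(\lambda)+\mathcal B^*(Z)\succeq0\) with \(S\notin F^\perp\).  Since \(\mathcal B^*(Z)\in F^\perp\), the block fact gives \(V^T\mathcal B^*(Z)V=0\), and therefore \(V^TSV=V^T\A^*(\lambda)V\).  This matrix is positive semidefinite, being a compression of \(S\succeq0\).  It is also nonzero, because \(S\notin F^\perp\) means \(V^TSV\neq0\) by the block fact.

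\emph{From \eqref{eq:second_step_current_face_auxiliary} to \eqref{eq:second_step_ambient_auxiliary}.}  This is the main step, since we must construct \(Z\).  Set \(A:=\A^*(\lambda)\) and \(R:=V^TAV\), so that \(R\succeq0\) and \(R\neq0\).  The aim is to choose \(\mathcal B^*(Z)\in F^\perp\) so that it cancels the off-diagonal blocks of \(A\) and replaces its \((U,U)\) block with something nonnegative.  The natural target is
\[
S:=VRV^T=VV^TAVV^T.
\]
This \(S\) is positive semidefinite, and it lies outside \(F^\perp\) because \(V^TSV=R\neq0\).

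It remains to check that \(S-A\in F^\perp=\range(\mathcal B^*)\).  Since \(V^T(S-A)V=R-R=0\), the block fact gives \(S-A\in F^\perp\).  By \eqref{eq:face_equation_adjoint_range}, there is then some \(Z\) with \(\mathcal B^*(Z)=S-A\).  If an explicit choice is wanted, one can write \(S-A=-(VV^TAUU^T+UU^TAVV^T+UU^TAUU^T)\) using \(VV^T+UU^T=I\).  Each of these terms has the form \(\tfrac12(ZU^T+UZ^T)\) for suitable \(Z\); for example, \(Z=-2VV^TAU-UU^TAU\) works.  With this \(Z\), the pair \((\lambda,Z)\) satisfies \eqref{eq:second_step_ambient_auxiliary}, which completes the equivalence.
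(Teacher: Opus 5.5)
Your proof is correct and follows essentially the same route as the paper: the paper's completion $D=P_VSP_V-S\in F^\perp$ is your choice $S=VV^T\A^*(\lambda)VV^T$, and both arguments use $S\in F^\perp\iff V^TSV=0$ together with $\range(\mathcal B^*)=F^\perp$. You also justify that characterization of $F^\perp$ via $\langle S,VRV^T\rangle=\langle V^TSV,R\rangle$ and give a correct explicit $Z$, both of which the paper leaves implicit.
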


\begin{proof}
For every \(S\in\mathbb S^N\),
\begin{equation}
\label{eq:face_dual_ambient_completion}
V^TSV\succeq0
\quad\Longleftrightarrow\quad
S+D\succeq0
\quad\text{for some }D\in F^\perp.
\end{equation}
Indeed, since \(F=\{VRV^T:R\succeq0\}\), let \(P_V:=VV^T\).  If
\(V^TSV\succeq0\),
define \(D:=P_VSP_V-S\).
Then \(V^TDV=0\), so \(D\in F^\perp\), and
\[
S+D=P_VSP_V=V(V^TSV)V^T\succeq0.
\]
Conversely, if \(S+D\succeq0\) for some \(D\in F^\perp\), then
\(V^TSV=V^T(S+D)V\succeq0\), where \(V^TDV=0\) follows from
\(D\in F^\perp\).  Combining
\eqref{eq:face_dual_ambient_completion} with
\(\range(\mathcal B^*)=F^\perp\) proves the equivalence of the cone
conditions.  Since \(F^\perp\) is a subspace, adding
\(\mathcal B^*(Z)\in F^\perp\) does not change membership in \(F^\perp\).
Moreover, \(V^TSV=0\) if and only if \(S\in F^\perp\).
Hence the two nonzero conditions are equivalent, and the equation
\(b^T\lambda=0\) is unchanged.
\end{proof}

To describe the second facial-reduction step in the original matrix
coordinates, suppose that \(F\) is the face obtained at the first step.
Because \(F\) contains the original SDP feasible set,
\eqref{eq:face_equation_exact_feasible_set} applies.
By \Cref{lem:second_step_auxiliary_equivalence}, the auxiliary system based on
\(L_F\cap\mathbb S_+^N\) has exactly the same feasible \(\lambda\)-multipliers
as the standard auxiliary system over \(F\), and the corresponding exposing
matrices expose the same face of \(F\).
Thus, the second step can be performed in the original matrix coordinates.
The same representation can be used at subsequent steps whenever the newly
obtained face contains the current SDP feasible set.

\subsection{Iterative face identification using polyhedral relaxations}
\label{sec:repeated_factor_width_comparison}

We now iterate the polyhedral construction.  We first describe
the second application and then state the full recursion.  The first
application in \Cref{sec:comparison_factor_width_partial_fra} produces
\(F_{\mathrm{aff}}\subseteq F_{\mathrm{fw}}\).
Unlike the face \(F\) in
\Cref{sec:structure_preserving_fr_clean}, \(F_{\mathrm{aff}}\) need not contain
the feasible set of the original SDP relaxation.  After the first
face-identification step, we therefore consider the strengthened relaxation
\(L\cap F_{\mathrm{aff}}\), which contains every feasible binary lift.

Let the columns of \(U\) span the orthogonal complement of the subspace
defining \(F_{\mathrm{aff}}\), and define
\(\widetilde L
=L\cap\operatorname{span}(F_{\mathrm{aff}})\).  Equivalently,
\begin{equation}
\label{eq:iterated_equation_augmented_space}
\widetilde L:=\{Y\in L:YU=0\},
\qquad
\widetilde H:=\{y\in\mathbb R^{\mathcal P_k}:M_1(y)\in\widetilde L\},
\qquad
\widetilde Q:=\widetilde H\cap K.
\end{equation}
Since
\(F_{\mathrm{aff}}
=\operatorname{span}(F_{\mathrm{aff}})\cap\mathbb S_+^{n+1}\), we have
\begin{equation}
\label{eq:second_step_strengthened_relaxation}
\widetilde L\cap\mathbb S_+^{n+1}
=
L\cap F_{\mathrm{aff}}.
\end{equation}
Thus, the face equations represent the strengthened relaxation exactly in the
original matrix coordinates.
From this polyhedron, construct
\begin{equation}
\label{eq:second_step_polyhedral_face}
\widetilde{\mathcal V}:=\Pi\operatorname{span}(\widetilde Q),
\qquad
\widetilde F_{\mathrm{aff}}:=
\{Y\succeq0:\range(Y)\subseteq\widetilde{\mathcal V}\}.
\end{equation}
For any \(\widetilde W\in\widetilde L^\perp\cap\FW_k^{n+1}\), define
\[
\widetilde F_{\mathrm{fw}}
:=
\{Y\in\mathbb S_+^{n+1}:\range(Y)\subseteq\ker(\widetilde W)\}.
\]

\begin{prop}[Second-step comparison]
\label{prop:p1_preserving_face_update}
For every \(\widetilde W\in\widetilde L^\perp\cap\FW_k^{n+1}\), the
second-step faces satisfy
\begin{equation}
\label{eq:p1_preserving_face_update}
P_{\mathrm{lift}}\subseteq \widetilde F_{\mathrm{aff}}
\subseteq\widetilde F_{\mathrm{fw}}.
\end{equation}
\end{prop}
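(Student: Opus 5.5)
The plan is to reduce the statement to \Cref{thm:moment_face_inside_fw_face}, applied to the affine subspace \(\widetilde L\) in place of \(L\). That theorem was proved for an arbitrary affine \(L\subseteq\mathbb S^{n+1}\) such that \(L\cap\mathbb S_+^{n+1}\) is an SDP relaxation of \(P\). Its proof used only three facts: the induced moment vectors of points of \(P\) lie in \(H\cap K\); \(M_1(y)\in L\) implies \(\langle\bar W,M_1(y)\rangle=0\) for \(\bar W\in L^\perp\); and the factor-width decomposition of \(\bar W\). So the key task is to verify that \(\widetilde L\) is affine and that \(\widetilde L\cap\mathbb S_+^{n+1}\) is again an SDP relaxation of \(P\).

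First, \(\widetilde L=L\cap\operatorname{span}(F_{\mathrm{aff}})\) is an intersection of an affine subspace with a linear subspace, hence affine. It is nonempty because it contains the lifts, as shown next.

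Second, I will show \(P_{\mathrm{lift}}\subseteq\widetilde L\cap\mathbb S_+^{n+1}\). Each lift \(yy^T\) with \(y=[1\;x^T]^T\), \(x\in P\), lies in \(L\) because \(L\cap\mathbb S_+^{n+1}\) is an SDP relaxation of \(P\). It also lies in \(F_{\mathrm{aff}}\) by the first inclusion of \Cref{thm:moment_face_inside_fw_face}. By \eqref{eq:second_step_strengthened_relaxation}, it therefore lies in \(\widetilde L\cap\mathbb S_+^{n+1}=L\cap F_{\mathrm{aff}}\).

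Once this holds, the objects \(\widetilde H\), \(\widetilde Q=\widetilde H\cap K\), \(\widetilde{\mathcal V}\) and \(\widetilde F_{\mathrm{aff}}\) are exactly the objects \(H\), \(H\cap K\), \(\mathcal V\) and \(F_{\mathrm{aff}}\) built from \(\widetilde L\). Likewise, \(\widetilde F_{\mathrm{fw}}\) is \(F_{\mathrm{fw}}\) for \(\bar W=\widetilde W\in\widetilde L^\perp\cap\FW_k^{n+1}\). The chain \eqref{eq:p1_preserving_face_update} then follows verbatim from \eqref{eq:affine_hull_completed_fw_chain}. One definitional point needs checking: \(\widetilde{\mathcal V}\) is defined as \(\Pi\operatorname{span}(\widetilde Q)\), which matches the second expression in \eqref{eq:affine_hull_completed_subspace}, so no affine-hull subtlety arises.

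For a self-contained argument, I would also spell out the first inclusion directly. For \(x\in P\), the induced moment vector \(y\) satisfies \(y\in K\) because each local moment matrix \(M_{U_i}(y)\) is a rank-one PSD matrix. It satisfies \(M_1(y)=\Pi y(\Pi y)^T\in\widetilde L\) by the second step above. Hence \(y\in\widetilde Q\), so \(\Pi y\in\widetilde{\mathcal V}\), and the lift lies in \(\widetilde F_{\mathrm{aff}}\).

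The second inclusion repeats the quadratic-form argument. Write \(\widetilde W=\sum_t g_tg_t^T\) with \(|\supp(g_t)|\le k\). For \(y\in\widetilde Q\), the pairing satisfies \(0=\langle\widetilde W,M_1(y)\rangle=\sum_t\widetilde g_t^TM_{U_t}(y)\widetilde g_t\), and every term is nonnegative. So \(M_{U_t}(y)\widetilde g_t=0\) for each \(t\). The \(\emptyset\)-row then gives \(g_t^T\Pi y=0\), so \(\Pi y\in\ker\widetilde W\), and taking spans yields \(\widetilde{\mathcal V}\subseteq\ker\widetilde W\).

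The only genuine obstacle is the second step: verifying that \(\widetilde L\) still contains all binary lifts. This matters because \(F_{\mathrm{aff}}\) need not contain the SDP feasible set. The step rests entirely on the first inclusion of \Cref{thm:moment_face_inside_fw_face} together with \eqref{eq:second_step_strengthened_relaxation}. Everything else is a direct reuse of the earlier proof.
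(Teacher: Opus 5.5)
Your proposal is correct and follows the paper's proof. Both arguments note that $P_{\mathrm{lift}}\subseteq L\cap F_{\mathrm{aff}}=\widetilde L\cap\mathbb S_+^{n+1}$, so $\widetilde L\cap\mathbb S_+^{n+1}$ is again an SDP relaxation of $P$, and then apply \Cref{thm:moment_face_inside_fw_face} with $L$ replaced by $\widetilde L$. Your explicit re-derivation of the two inclusions is consistent with that theorem's proof but not needed.
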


\begin{proof}
Since \(P_{\mathrm{lift}}\subseteq F_{\mathrm{aff}}\), imposing \(YU=0\)
preserves every feasible binary lift.  Hence
\(\widetilde L\cap\mathbb S_+^{n+1}\) is an SDP relaxation of the same binary
set.  Applying \Cref{thm:moment_face_inside_fw_face} with \(L\) replaced by
\(\widetilde L\) proves \eqref{eq:p1_preserving_face_update}.
\end{proof}

Repeating the argument yields the full sequence.  Initialize
\(F_{\mathrm{aff}}^{(0)}:=\mathbb S_+^{n+1}\).  If the columns of
\(U^{(i-1)}\) span the orthogonal complement of the subspace defining
\(F_{\mathrm{aff}}^{(i-1)}\), define
\begin{equation}
\label{eq:iterated_polyhedral_relaxation}
\begin{aligned}
L^{(i-1)}&:=\{Y\in L:YU^{(i-1)}=0\},\\
H^{(i-1)}&:=\{y\in\mathbb R^{\mathcal P_k}:M_1(y)\in L^{(i-1)}\},
\qquad Q^{(i-1)}:=H^{(i-1)}\cap K,\\
F_{\mathrm{aff}}^{(i)}
&:=\{Y\succeq0:\range(Y)\subseteq
\Pi\operatorname{span}(Q^{(i-1)})\}.
\end{aligned}
\end{equation}
Here \(U^{(0)}\) has no columns and \(L^{(0)}=L\).  For each \(i\geq1\), choose
an arbitrary matrix
\(\bar W^{(i)}\in(L^{(i-1)})^\perp\cap\FW_k^{n+1}\) and define
\[
F_{\mathrm{fw}}^{(i)}
:=
\{Y\in\mathbb S_+^{n+1}:\range(Y)\subseteq\ker(\bar W^{(i)})\}.
\]

\begin{cor}[Repeated comparison in the original matrix coordinates]
\label{cor:repeated_lp_face_comparison}
For every \(i\geq1\) and every such choice of \(\bar W^{(i)}\),
\begin{equation}
\label{eq:iterated_face_chain}
P_{\mathrm{lift}}
\subseteq
F_{\mathrm{aff}}^{(i)}
\subseteq
F_{\mathrm{fw}}^{(i)}.
\end{equation}
\end{cor}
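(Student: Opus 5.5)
We argue by induction on \(i\), the key point being that at every stage \(L^{(i-1)}\cap\mathbb S_+^{n+1}\) is an SDP relaxation of \(P\) in the sense of \Cref{subsec:sdp_relaxations_binary_programs}. Once that holds, \Cref{thm:moment_face_inside_fw_face} applied with \(L\) replaced by the affine subspace \(L^{(i-1)}\) (with \(H\), \(\mathcal V\), \(F_{\mathrm{aff}}\) replaced by \(H^{(i-1)}\), \(\Pi\operatorname{span}(Q^{(i-1)})\), \(F_{\mathrm{aff}}^{(i)}\)) gives \eqref{eq:iterated_face_chain} directly for every \(\bar W^{(i)}\in(L^{(i-1)})^\perp\cap\FW_k^{n+1}\). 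This works because the proof of that theorem used only that \(L\) is affine and contains \(P_{\mathrm{lift}}\); the inclusion \(F_{\mathrm{aff}}\subseteq F_{\mathrm{fw}}\) used only \(\bar W\in L^\perp\cap\FW_k^{n+1}\) and \(K\), which is unchanged.

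We use the induction hypothesis \(P_{\mathrm{lift}}\subseteq F_{\mathrm{aff}}^{(i-1)}\). For \(i=1\) this is immediate from \(F_{\mathrm{aff}}^{(0)}=\mathbb S_+^{n+1}\), and moreover \(L^{(0)}=L\), so the case \(i=1\) is exactly \Cref{thm:moment_face_inside_fw_face}. For the step, take \(x\in P\) and \(Y=yy^T\) with \(y=[1\;x^T]^T\). By the hypothesis, \(\range(Y)\) lies in the subspace defining \(F_{\mathrm{aff}}^{(i-1)}\), whose orthogonal complement is spanned by the columns of \(U^{(i-1)}\). Hence \(U^{(i-1)T}y=0\), so \(YU^{(i-1)}=y(y^TU^{(i-1)})=0\). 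Since also \(Y\in L\) by the original relaxation assumption, we get \(Y\in L^{(i-1)}\cap\mathbb S_+^{n+1}\). Thus \(L^{(i-1)}\cap\mathbb S_+^{n+1}\) is an SDP relaxation of \(P\), and \(L^{(i-1)}\) is affine as the intersection of \(L\) with a linear subspace.

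Applying \Cref{thm:moment_face_inside_fw_face} then yields both \(P_{\mathrm{lift}}\subseteq F_{\mathrm{aff}}^{(i)}\) and \(F_{\mathrm{aff}}^{(i)}\subseteq F_{\mathrm{fw}}^{(i)}\). The first of these supplies the induction hypothesis for \(i+1\), which completes the argument.

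The only mildly delicate point is that the recursion intersects with the original \(L\) and the current \(U^{(i-1)}\) only, not with all previous face equations. This causes no difficulty, since the argument never needs nestedness of the faces, only that every binary lift satisfies the current equations. Equivalently, one can view the step as \Cref{prop:p1_preserving_face_update} applied repeatedly.
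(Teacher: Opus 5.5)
Your proposal is correct and follows essentially the same induction as the paper: verify that each binary lift satisfies the equations defining \(L^{(i-1)}\), so that \(L^{(i-1)}\cap\mathbb S_+^{n+1}\) is again an SDP relaxation of \(P\), and then reapply \Cref{thm:moment_face_inside_fw_face}. The only difference is that you write out explicitly the step \(U^{(i-1)T}y=0\Rightarrow yy^TU^{(i-1)}=0\), which the paper leaves implicit.
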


\begin{proof}
The result follows by induction.  The case \(i=1\) is
\Cref{thm:moment_face_inside_fw_face}.  If
\(P_{\mathrm{lift}}\subseteq F_{\mathrm{aff}}^{(i-1)}\), then the equations
defining \(L^{(i-1)}\) preserve every feasible binary lift.  Hence
\(L^{(i-1)}\cap\mathbb S_+^{n+1}\) is an SDP relaxation of the same binary set,
and another application of \Cref{thm:moment_face_inside_fw_face} proves
\eqref{eq:iterated_face_chain}.
\end{proof}

In particular, \Cref{prop:p1_preserving_face_update,cor:repeated_lp_face_comparison}
apply when the factor-width matrices are chosen to have maximum rank in their
respective cone intersections.

The polyhedral faces are nested.  Indeed, if \(y\in Q^{(i-1)}\), then
\(M_1(y)U^{(i-1)}=0\), so
\(\Pi y=M_1(y)e_{\emptyset}\) belongs to the subspace defining
\(F_{\mathrm{aff}}^{(i-1)}\).  Therefore
\(F_{\mathrm{aff}}^{(i)}\subseteq F_{\mathrm{aff}}^{(i-1)}\).
Consequently, \(L^{(i)}\subseteq L^{(i-1)}\) and
\(Q^{(i)}\subseteq Q^{(i-1)}\).

A strict inclusion can occur because constructing
\(F_{\mathrm{aff}}^{(i)}\) uses only the projected condition
\(\Pi y\in\Pi\operatorname{span}(Q^{(i-1)})\), whereas the next polyhedron
imposes the full matrix equation \(M_1(y)U^{(i)}=0\).  Consequently,
\(Q^{(i)}\) can be a proper subset of \(Q^{(i-1)}\).

The sequence in \eqref{eq:iterated_face_chain} defines iterative face
identification using polyhedral relaxations.  It is not, in general, a
classical facial-reduction sequence because
\(F_{\mathrm{aff}}^{(i)}\) need not contain the current SDP feasible set
\(L^{(i-1)}\cap\mathbb S_+^{n+1}\).  When this containment holds, imposing the
corresponding face equations preserves the full SDP feasible set.  If, in
addition, each face is obtained from a valid facial-reduction certificate for
the current affine system, the updates constitute the structure-preserving
implementation of standard FR described in
\Cref{sec:structure_preserving_fr_clean}.

We say that the procedure terminates after \(r\) steps if the next update does
not produce a strictly smaller identified face, that is, if
\(F_{\mathrm{aff}}^{(r+1)}=F_{\mathrm{aff}}^{(r)}\).  Upon termination, let
the columns of \(V^{(r)}\) span the subspace defining
\(F_{\mathrm{aff}}^{(r)}\).  The strengthened SDP on this face is obtained by
substituting the following expression into the original SDP formulation:
\begin{equation}
\label{eq:structure_preserving_final_reformulation}
Y=V^{(r)}R(V^{(r)})^T,
\qquad R\succeq0.
\end{equation}
This formulation contains every matrix in \(P_{\mathrm{lift}}\), but it need
not be equivalent to the original SDP relaxation.  Termination alone does not imply
strict feasibility.  If
\(F_{\mathrm{aff}}^{(r)}\) is the minimal face containing
\(L\cap\mathbb S_+^{n+1}\), then the formulation in \(R\) is equivalent to the
original SDP and satisfies Slater's condition.

\endgroup

\section{Conclusion}

\begingroup
For SDP relaxations of binary programs, we constructed a
polyhedron from local RLT bound-factor inequalities.  For fixed \(k\), this
polyhedron has polynomial size, and its projected span
determines a face of the positive semidefinite cone containing every feasible
binary lift.

For \(k\geq3\), the factor-width-\(k\) auxiliary problem is itself an SDP with
positive semidefinite blocks of order \(k\).  In contrast, for fixed \(k\), the
proposed face can be identified using LP.  For the same SDP formulation and the
same value of \(k\), this face is contained in every face exposed by a
factor-width-\(k\) exposing matrix from the auxiliary system.  Because it may
exclude fractional SDP-feasible matrices, imposing the face can strengthen the
relaxation while preserving every feasible binary lift.

Representing each identified face by linear equations enables
repeated application of the construction while retaining the original
moment-matrix indexing.  Every iteration preserves the feasible binary lifts
and satisfies the same factor-width comparison.  When the identified faces
also contain the full SDP feasible set and arise from valid FR certificates,
the iteration specializes to standard facial reduction.
\endgroup

\section*{Acknowledgments}
The work of all authors was supported by the Air Force Office of Scientific
Research under award number FA9550-23-1-0508.

\bibliographystyle{siam}
\bibliography{mybib}
\addcontentsline{toc}{section}{Bibliography}

\end{document}